\documentclass{article}

\usepackage[latin1]{inputenc}  
\usepackage{graphicx}          
\usepackage{psfrag}            
\usepackage{amsmath}           
\usepackage{amsthm}
\usepackage{amsfonts}
\usepackage{amssymb}
\usepackage[all]{xy}

\setlength{\textwidth}{390pt}

\newcommand{\Q}{\mathbb{Q}}

\newtheorem{exm}{Example}
\newtheorem{definition}{Definition}[subsection]
\newtheorem{thm}[definition]{Theorem}

\begin{document}
\title{Dualization of projective algebraic sets by using Gröbner bases elimination techniques}
\author{{\large {\sc C{\v a}lin--{\c S}erban B{\v a}rbat}}\\
{\tt calin.barbat@web.de}}
\maketitle
\abstract{The set of common roots of a finite set $I$ (it  is an ideal) of homogeneous polynomials is known as projective algebraic set $V$. 
In this article I show how to dualize such projective algebraic sets $V$ by elimination of variables from a system of polynomials with the Gröbner bases method. A dualization algorithm is implemented in {\sc Singular} -- see \cite{GPS09}. Some examples are given. The main diagram shows the relationship between the ideal $I$, its radical $\sqrt{I}$ and their dual ideals.
}
\section{Introduction}

I read about the duality principle in the book \cite{gie} and saw some examples of dualization in the books 
\cite{gie}, \cite{kom} for quadrics and in the more recent introductory book on plane curves \cite{GF94} for 
plane curves. This last book gives some particular examples how the dualization is carried out but no general
method. Some authors mention that variables have to be eliminated from a system. For plane curves the system is 
derived nicely in \cite{bri}. For one hypersurface I recently found \cite{pw} p. 104f. But for intersections of
hypersurfaces the only example that I found was the intersection of two hypersurfaces in \cite{kom} as a general
example, without being specific. So I derived in this article the system for the intersection case.

In what follows I recommend to read \cite{GP} for the theoretical background about projective space,
homogeneous polynomial, ideal, projective variety, etc., which is not covered in the present work. For 
an introduction to Gröbner bases see \cite{cox} or \cite{fro}. I used the methods derived in this article to dualize some examples and also checked with the examples given in \cite{hr}.

\section{Motivation with plane curves}
In what follows here, we assume that the denominators do not vanish. Think of the inversion radius $r$ as having value $i=\sqrt{-1}$. (Other values are also permitted, e.g. $1$.) We consider different representations of plane curves.
\subsection{Parametric}
We consider a parametrically given plane curve $c(t)=(x(t), y(t))^t$. Then we can define the pedal curve of $c(t)$ with respect 
to the origin as
$$p(c(t))= \frac{y(t)\,x'(t) - x(t)\,y'(t)}{{x'(t)}^2 + {y'(t)}^2} \left( \begin{array}{c}-y'(t)\\x'(t)\end{array} \right)$$
(the pedal is the locus of the feet of perpendiculars from the origin to the tangents of the curve $c(t)$). We can also define 
what it means to invert $c(t)$ with respect to the circle of radius $r$ around the origin:
$$\iota(c(t))=\frac{r^2}{{x(t)}^2 + {y(t)}^2} \left( \begin{array}{c}x(t)\\y(t)\end{array} \right)$$
By composing the two maps given above we get the dual of $c(t)$ as the inverse of the pedal:
$$d(c(t))=\iota(p(c(t)))=\frac{r^2}{y(t)\,x'(t) - x(t)\,y'(t)} \left( \begin{array}{c}-y'(t)\\x'(t)\end{array} \right)$$
This is best explained by a commutative diagram:
$$
\xymatrix{ & c(t) \ar[dl]_d \ar[d]^p \\
          \iota(p(c(t))) \ar[ur] \ar[r]^\iota & p(c(t)) \ar[l] }
$$
\subsection{Complex}
Now we do the same for a curve $z(t)$ in the complex plane. The pedal is
$$p(z(t)) = \frac{\overline{z'(t)}\,z(t) - \overline{z(t)}\,z'(t)}{2\,\overline{z'(t)}} $$
The inverse is
$$\iota(z(t))=\frac{r^2}{\overline{z(t)}}$$
and the dual is (again by composition)
$$d(z(t))=\iota(p(z(t)))=\frac{2\,r^2\,z'(t)}{\overline{z(t)}\,z'(t)-\overline{z'(t)}\,z(t)}$$
A similar commutative diagram as in the parametric case holds.
\subsection{Implicit}
For implicitly given curves $f(x, y)=0$ we cannot give explicit formulas for the pedal curve but we give a method for computing it.
We need the gradient $\nabla f(x, y) = \left(\frac{\partial f}{\partial x}(x, y), \frac{\partial f}{\partial y}(x, y)\right)^t$.
Assume $p=(x,y)$ is a point of $f$ and $P=(X,Y)$ is a point of the pedal of $f$. Then, by the definition of the pedal, following
must hold:
\begin{enumerate}
	\item $p=(x,y)$ is a point of $f$: $f(x,y)=0$.
	\item $P$ lies on the tangent at $f$ through $p$: $(P-p)^t \nabla f(x, y) =0$.
	\item $P$ is orthogonal to tangent at $f$ through $p$: $P^t \left(\frac{\partial f}{\partial y}(x, y), -\frac{\partial f}{\partial x}(x, y)\right)^t =0$
\end{enumerate}
By eliminating $(x,y)$ from these three equations we get an equation in $(X,Y)$ which is the pedal curve. For convenience we substitute $(X,Y)\mapsto (x,y)$. In what follows, we will see how the elimination can be done with Gröbner bases.

The inverse of $f(x, y)=0$ is $f\left(\frac{r^2\,x}{x^2 + y^2},\frac{r^2\,y}{x^2 + y^2}\right)=0$.
The dual of $f$ is the composition of inversion and pedal as constructed above.

\section{Theory}
\subsection{Case of one homogeneous polynomial}

First we consider the following projective algebraic set
$$
V(p) = \{{\bf x} \in \mathbb{K}^{n+1}\mid p({\bf x})=0\}
$$
with $\mathbb{K}$ an algebraic closed field, ${\bf x}=(x_0, x_1, \ldots, x_n)$ a point of $\mathbb{K}^{n+1}$
and $p$ a homogeneous polynomial from $\mathbb{K}[x_0, x_1, \ldots, x_n]$. $V(p)$ consists of all roots ${\bf x}=(x_0, x_1, \ldots, x_n)$ of $p$ and is a hypersurface in the projective space ${\mathbb P}^n$. 

Let ${\bf u} = (u_0, u_1, \ldots, u_n)$ be a normal vector to $V(p)$ in a regular point ${\bf x} \in V(p)$.
On the other side we know that the gradient 
$$\nabla p({\bf x}) = \left(\frac{\partial p}{\partial x_0}({\bf x}), \frac{\partial p}{\partial x_1}({\bf x}), \ldots, \frac{\partial p}{\partial x_n}({\bf x})\right)^t$$
is normal to $V(p)$ in $\bf x$. Therefore ${\bf u}$ and $\nabla p({\bf x})$ are linearly dependent. This can be written as ${\bf u}=\lambda\nabla p({\bf x})$ with a factor $\lambda$. We can form the following system
\begin{align}
\left\{\begin{aligned}
\begin{split}
p({\bf x}) &= 0 \\
{\bf u} - \lambda \nabla p({\bf x}) &= {\bf 0} \\
\end{split}
\end{aligned}\right.\label{ds1}
\end{align}
\begin{definition}
We define the set $V^*(p)=\{{\bf u} \in \mathbb{K}^{n+1}\mid p({\bf x}) = 0, \, {\bf u} - \lambda \nabla p({\bf x}) = 0 \}$
of partial solutions to the system (\ref{ds1}) to be the dual of $V(p)$.
\end{definition}
Note that we are not interested in a complete solution of (\ref{ds1}), but only in the partial solutions which I call here the $\bf u$-part of the solution. The $\bf u$-part of the solution of this system is the result of applying the Gauß map to $p({\bf x}) = 0$, where the Gauß map (see \cite{s}, p. 103) is given -- in Chow coordinates (see \cite{l}) -- by
$$
\gamma : {\bf x} \mapsto {\bf u} = \lambda \nabla p({\bf x})
$$
Now we want to construct an equivalent system to (\ref{ds1}) but simpler in structure, describing the same dual algebraic set $V^*(p)$.
\begin{thm}
There exists a system $B$ of polynomials in $\mathbb{K}[u_0, u_1, \ldots, u_n]$ with the same solution set $V^*(p)$ as the system (\ref{ds1}).
\end{thm}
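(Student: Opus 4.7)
The plan is to interpret system (\ref{ds1}) as defining an affine variety in a larger ambient space and to apply Gröbner basis elimination to project it onto the $\mathbf{u}$-coordinates. Let $R = \mathbb{K}[\lambda, x_0, \ldots, x_n, u_0, \ldots, u_n]$ and let $I \subset R$ be the ideal generated by $p(\mathbf{x})$ together with the $n+1$ polynomials $u_i - \lambda\,\partial p/\partial x_i(\mathbf{x})$ for $i = 0, \ldots, n$. Then $V^*(p)$ is, by definition, the image of the projection $\pi : V(I) \to \mathbb{K}^{n+1}$ sending $(\lambda, \mathbf{x}, \mathbf{u})$ to $\mathbf{u}$.

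First I would fix a monomial order on $R$ that eliminates the variables $\lambda, x_0, \ldots, x_n$---for example the lexicographic order with $\lambda > x_0 > \cdots > x_n > u_0 > \cdots > u_n$, or a block elimination order with the same effect---and compute a reduced Gröbner basis $G$ of $I$ with respect to this order. By the Elimination Theorem (see \cite{cox}, \cite{fro}), the set $B := G \cap \mathbb{K}[u_0, \ldots, u_n]$ is a finite Gröbner basis of the elimination ideal $I \cap \mathbb{K}[u_0, \ldots, u_n]$. This produces the candidate system in the required variables.

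It then remains to verify $V(B) = V^*(p)$. The inclusion $V^*(p) \subseteq V(B)$ is immediate, since each $b \in B$ lies in $I$, vanishes on $V(I)$, and hence vanishes on $\pi(V(I)) = V^*(p)$. The main obstacle is the reverse inclusion, because the Closure Theorem a priori gives only $V(B) = \overline{\pi(V(I))}$, the Zariski closure of $V^*(p)$, which could in principle be strictly larger than the image itself. To close this gap I would exploit the conical (projective) symmetry of the construction: $p$ is homogeneous in $\mathbf{x}$, and the generators of $I$ are homogeneous with respect to the weighted grading assigning degree $1$ to each $x_i$, degree $\deg(p)-1$ to each $u_j$, and degree $0$ to $\lambda$. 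Consequently $\pi(V(I))$ is a cone in $\mathbb{K}^{n+1}$ and the projection factors, after homogenizing the $\mathbf{x}$-coordinates, through a morphism from a projective variety, which is closed by the main theorem of elimination theory for projective space. Alternatively, one may adopt the standard convention of defining the dual variety as the Zariski closure of the image of the Gauss map, in which case the identity $V(B) = \overline{\pi(V(I))}$ is already what the theorem requires. Either route yields the desired $B$.
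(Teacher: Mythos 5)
Your setup---form the ideal $I \subset \mathbb{K}[\lambda,\mathbf{x},\mathbf{u}]$ from system (\ref{ds1}), compute a Gr\"obner basis $G$ with respect to an order that eliminates $\lambda$ and $\mathbf{x}$, and take $B = G \cap \mathbb{K}[\mathbf{u}]$---is exactly the paper's. Where you go beyond the paper is in explicitly confronting the issue it skips: the Elimination Theorem shows $B$ generates the elimination ideal $I \cap \mathbb{K}[\mathbf{u}]$, but the Closure Theorem then gives only $V(B) = \overline{\pi(V(I))}$, the Zariski closure, not $\pi(V(I)) = V^*(p)$ itself. The paper simply asserts that $B$ has ``the same partial solution set $V^*(p)$'' without arguing the containment $V(B) \subseteq V^*(p)$; you are right to flag that a priori only one inclusion is free.

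Your first proposed repair does not, however, close the gap. Weighted homogeneity does make $\pi(V(I))$ a cone, but cones need not be Zariski-closed, and the map you would like to apply completeness to, $[\mathbf{x}] \mapsto [\nabla p(\mathbf{x})]$ on $V(p) \subset \mathbb{P}^n$, is only a \emph{rational} map: it is undefined at singular points where $\nabla p$ vanishes, so completeness of projective space does not give closedness of the image. Concretely, for the cuspidal cubic $p = x_0^2 x_2 - x_1^3$ one has $\overline{V^*(p)} = V(4u_1^3 + 27 u_0^2 u_2)$, yet the point $(u_0,u_1,u_2)=(1,0,0)$ on this hypersurface is not in $V^*(p)$: from $u_2 = \lambda x_0^2 = 0$ one gets $\lambda = 0$ or $x_0 = 0$, and either choice forces $u_0 = 2\lambda x_0 x_2 = 0$. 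Thus $V^*(p)$ as literally defined in the paper is not Zariski-closed, no polynomial system can have exactly that zero set, and the theorem as stated fails in general. Your second route---taking the dual to be the Zariski closure of the image of the Gauss map, as is standard---is the correct repair; under that convention the elimination argument (yours and the paper's) does prove the theorem, and you should commit to it rather than presenting the two routes as interchangeable.
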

\begin{proof}
We start with the system (\ref{ds1}) viewed as system of polynomials $$q_j \in \mathbb{K}[x_0, x_1, \ldots, x_n, \lambda, u_0, u_1, \ldots, u_n]$$
and eliminate the first $n+2$ variables. The system is:
\begin{align*}
\left\{\begin{aligned}
\begin{split}
q_1(x_0, x_1, \ldots, x_n, \lambda, u_0, u_1, \ldots, u_n) &= p(x_0, \ldots, x_n) = 0 \\
q_2(x_0, x_1, \ldots, x_n, \lambda, u_0, u_1, \ldots, u_n) &= u_0 - {\lambda \frac{\partial p}{\partial x_0}(x_0, \ldots, x_n)} = 0 \\
&\vdots \\
q_{i+2}(x_0, x_1, \ldots, x_n, \lambda, u_0, u_1, \ldots, u_n) &= u_i - {\lambda \frac{\partial p}{\partial x_i}(x_0, \ldots, x_n)} = 0 \\
&\vdots \\
q_{n+2}(x_0, x_1, \ldots, x_n, \lambda, u_0, u_1, \ldots, u_n) &= u_n - {\lambda \frac{\partial p}{\partial x_n}(x_0, \ldots, x_n)} = 0 \\
\end{split}
\end{aligned}\right. 
\end{align*}
Let $G$ be a Gröbner basis for the ideal $S = (q_1, \ldots, q_j, \ldots, q_{n+3})$ with respect to an elimination ordering, where e.g. 
${\bf x} > {\bf \lambda} > {\bf u}$. By the Elimination Theorem of \cite{cox} this basis $G$ eliminates ${\bf x}$ and ${\bf \lambda}$
and $B=G \cap \mathbb{K}[u_0, u_1, \ldots, u_n]$ is a Gröbner basis of the elimination ideal $E=S \cap \mathbb{K}[u_0, u_1, \ldots, u_n]$.
We get $E=(B)$.
By construction, this basis $B$ is a system of polynomials in $\mathbb{K}[u_0, u_1, \ldots, u_n]$ having the same partial solution set $V^*(p)$ as the 
original system (\ref{ds1}).
\end{proof}
Because of this property we define $E=(B)$ to be the dual ideal of the initial ideal $(p)$. 
With the canonical isomorphism $u_i \stackrel{\cong}{\mapsto} x_i$ we can map the ideal $E \in \mathbb{K}[{\bf u}]$ to an ideal 
$D \in \mathbb{K}[{\bf x}]$ which leads to the following commutative diagram:
$$\xymatrix{
          (p) \in \mathbb{K}[{\bf x}] \ar[r]^{\gamma} \ar[d] & E \in \mathbb{K}[{\bf u}] \ar[dl]^{\cong} \\ 
           D  \in \mathbb{K}[{\bf x}] 
}$$
\begin{exm} 
As an example we dualize the quadric $(n=3)$:
$$
-b_{0}x_{0}^{2}+2b_{1}x_{0}x_{1}-\frac{b_{1}-1}{a_{1}}x_{1}^{2}+\frac{1}{a_{2}}x_{2}^{2}+\frac{1}{a_{3}}x_{3}^{2} = 0
$$
The system (\ref{ds1}) is here:
$$\left\{
\begin{aligned}
-b_{0}x_{0}^{2}+2b_{1}x_{0}x_{1}-\frac{b_{1}-1}{a_{1}}x_{1}^{2}+\frac{1}{a_{2}}x_{2}^{2}+\frac{1}{a_{3}}x_{3}^{2} &=0 \\
2b_{0}x_{0}\lambda_{1}-2b_{1}x_{1}\lambda_{1}+u_{0} &=0 \\
-2b_{1}x_{0}\lambda_{1}+\frac{2b_{1}-2}{a_{1}}x_{1}\lambda_{1}+u_{1} &=0 \\
-\frac{2}{a_{2}}x_{2}\lambda_{1}+u_{2} &=0 \\
-\frac{2}{a_{3}}x_{3}\lambda_{1}+u_{3} &=0 
\end{aligned}
\right.$$
By eliminating $x_i$ and $\lambda$ from these equations we get one equation in $u_i$, which is the dual quadric
$$
\begin{array}{c}
(b_{1}-1)u_{0}^{2}+2a_{1}b_{1}u_{0}u_{1}+a_{1}b_{0}u_{1}^{2}+(a_{1}a_{2}b_{1}^{2}-a_{2}b_{0}b_{1}+a_{2}b_{0})u_{2}^{2}+(a_{1}a_{3}b_{1}^{2}-a_{3}b_{0}b_{1}+a_{3}b_{0})u_{3}^{2} = 0
\end{array}
$$
\end{exm}
\begin{exm} 
As another example we dualize a quadric over $\Q[w,x,y,z]$:
$$\left(
\begin{array}{c}
w^{2}-x^{2}
\end{array}
\right)$$
and get
$$\left(
\begin{array}{c}
z, \\
y, \\
w^{2}-x^{2}
\end{array}
\right)$$
How about dualizing this result? This simple example shows that we must be able to dualize ideals given by more than one polynomial. This is done in the next section.
\end{exm}
\subsection{Case of a system of homogeneous polynomials}
The same method can be extended and applied to finite sets of polynomials which represent geometrically their intersection as hypersurfaces.
We consider the projective algebraic set
$$
V := V(I) = V(p_1, p_2, \ldots, p_m) = \{{\bf x} \in \mathbb{K}^{n+1}\mid p_1({\bf x})=\ldots=p_m({\bf x})=0\}
$$
with $\mathbb{K}$ an algebraically closed field, ${\bf x}=(x_0, x_1, \ldots, x_n)$ a point of $\mathbb{K}^{n+1}$
and $I$ the ideal generated by $m \geq 1$ homogeneous polynomials $p_1, p_2, \ldots, p_m$ from 
$\mathbb{K}[{\bf x}]$.

The dual projective algebraic set $V^*$ of $V$ is also a projective algebraic set which is the zero set of an ideal $E$ generated
by polynomials from $\mathbb{K}[u_0, u_1, \ldots, u_n]$. We get this elimination ideal $E$ by eliminating (using a suitable Gröbner 
basis) the $m+n+1$ variables $x_i$ and $\lambda_j$ from the $m+n+1$ equations:
\begin{align}
\left\{\begin{aligned}
\begin{split}
p_1(x_0, \ldots, x_n) &= 0 \\
&\vdots \\
p_m(x_0, \ldots, x_n) &= 0 \\
u_0 - \sum_{j=1}^{m}{\lambda_j \frac{\partial p_j}{\partial x_0}(x_0, \ldots, x_n)} &= 0 \\
&\vdots \\
u_i - \sum_{j=1}^{m}{\lambda_j \frac{\partial p_j}{\partial x_i}(x_0, \ldots, x_n)} &= 0 \\
&\vdots \\
u_n - \sum_{j=1}^{m}{\lambda_j \frac{\partial p_j}{\partial x_n}(x_0, \ldots, x_n)} &= 0 \\
\end{split}
\end{aligned}\right. \label{ds}
\end{align}
The $x_i$ are point and the $u_i$ hyperplane coordinates but their roles can be interchanged.

With the Jacobi matrix of ${\bf p}({\bf x})=(p_1, p_2, \ldots, p_m)^t({\bf x})$:
$$
J =  \begin{pmatrix}
\frac{\partial p_1}{\partial x_0} & \frac{\partial p_1}{\partial x_1} & \ldots & \frac{\partial p_1}{\partial x_n} \\
\vdots & \vdots & \ddots & \vdots & \\
\frac{\partial p_m}{\partial x_0} & \frac{\partial p_m}{\partial x_1} & \ldots & \frac{\partial p_m}{\partial x_n} \end{pmatrix}.
$$
and setting ${\bf \lambda}=(\lambda_1, \lambda_2, \ldots, \lambda_m)^t$
we can write the system (\ref{ds}) vectorially as
\begin{align}
\left\{\begin{aligned}
{\bf p}({\bf x})&={\bf 0} \\
{\bf u} - \sum_{j=1}^{m}{\lambda_j \nabla p_j({\bf x})} &= {\bf u}-J^t({\bf x}) {\bf \lambda} = {\bf 0} \\
\end{aligned}\right.
\end{align}
For $m=1$ this is the same as system (\ref{ds1}).
We redefine the Gauß map:
$$
\gamma : {\bf x} \mapsto {\bf u} = J^t({\bf x}) {\bf \lambda}
$$
\begin{exm} 
We dualize now the following ideal
$$\left(
\begin{array}{c}
x_{2}^{2}-x_{3}^{2}, \\
x_{0}-x_{2}
\end{array}
\right)$$
The system (\ref{ds}) is here (written as ideal)
$$\left(
\begin{array}{c}
x_{2}^{2}-x_{3}^{2}, \\
x_{0}-x_{2}, \\
-\lambda_{2}+u_{0}, \\
u_{1}, \\
-2x_{2}\lambda_{1}+\lambda_{2}+u_{2}, \\
2x_{3}\lambda_{1}+u_{3}
\end{array}
\right)$$
A sorted Gröbner basis with the elimination property is
$$\left(
\begin{array}{c}
u_{1}, \\
u_{0}^{2}+2u_{0}u_{2}+u_{2}^{2}-u_{3}^{2}, \\
\lambda_{2}-u_{0}, \\
2x_{3}\lambda_{1}+u_{3}, \\
x_{2}u_{3}+x_{3}u_{0}+x_{3}u_{2}, \\
x_{2}u_{0}+x_{2}u_{2}+x_{3}u_{3}, \\
2x_{2}\lambda_{1}-u_{0}-u_{2}, \\
x_{2}^{2}-x_{3}^{2}, \\
x_{0}-x_{2}
\end{array}
\right)$$
and we see that the first two elements generate the elimination ideal $E$ for this example
$$\left(
\begin{array}{c}
u_{1}, \\
u_{0}^{2}+2u_{0}u_{2}+u_{2}^{2}-u_{3}^{2}
\end{array}
\right)$$
\end{exm}
\subsection{Main diagram}
We have, by denoting with $D(\sqrt{I})$ the dual of the radical ideal of $I$:
$$\xymatrix{
  \sqrt{I} \ar[d]_{\gamma} & & I \ar[d]_<<<<{\gamma}\ar[ll]_{\supseteq} & \\
  D(\sqrt{I}) \ar[d]\ar[r]^{\subseteq} & \sqrt{D(\sqrt{I})} \ar[d]\ar@/^1.5pc/[rr]^>>>>{\subseteq}|\hole & D(I) \ar[d]\ar[r]^{\subseteq} & \sqrt{D(I)} \ar[d]\\
  V(D(\sqrt{I})) & V(\sqrt{D(\sqrt{I})}) \ar[l]_{\supseteq} & V(D(I)) & V(\sqrt{D(I)}) \ar[l]_{\supseteq} \ar@/^1.5pc/[ll]^>>>>{\supseteq}
}$$
(The bent arrow in the middle of the diagram needs a proof.)
As an example explaining this diagram consider:
$$I=\left(
\begin{array}{c}
z^{2}, \\
x+y-z
\end{array}
\right)$$
Then we have
$$D(\sqrt{I})=\sqrt{D(\sqrt{I})}=\left(
\begin{array}{c}
x-y
\end{array}
\right)$$
$$D(I)=\left(
\begin{array}{c}
x-y, \\
y^{2}+2yz+z^{2}
\end{array}
\right)$$
$$\sqrt{D(I)}=\left(
\begin{array}{c}
y+z, \\
x-y
\end{array}
\right)$$
By adding one polynom we get another example:
$$I=\left(
\begin{array}{c}
z^{2}, \\
x+y-z, \\
x^{3}+y^{3}
\end{array}
\right)$$
$$D(\sqrt{I})=\sqrt{D(\sqrt{I})}=\left(
\begin{array}{c}
x-y
\end{array}
\right)$$
$$D(I)=\left(
\begin{array}{c}
xy-y^{2}+xz-yz, \\
x^{2}-2xy+y^{2}
\end{array}\right)=\left(
\begin{array}{c}
(x-y)(y+z), \\
(x-y)^{2}
\end{array}
\right)$$
$$\sqrt{D(I)}=\left(
\begin{array}{c}
x-y
\end{array}
\right)$$

\section{Examples}
\subsection{Steiner's Roman surface}
I wrote a {\sc Singular} procedure {\tt dual} using the fast elimination provided by {\sc Singular} with the combination of {\tt hilb} and {\tt eliminate} for computing the dual ideal $D$ of a given ideal $I$. The procedure {\tt dual} takes as argument a finitely generated homogeneous ideal $I=\left( p_1, \ldots, p_m \right)$, where all generating polynomials $p_k$ are homogeneous and elements of the {\tt basering}.

A first example {\sc Singular} input file for testing the functionality is given in the appendix.
This file dualizes Steiner's Roman surface (see \cite{hr} for other nice dualization examples):
\begin{exm} 
\begin{enumerate}
	\item First we set the {\tt basering} to $\Q[x_{0},x_{1},x_{2},x_{3}]$.
	\item As an example we want to dualize the ideal 
$$I=\left(
\begin{array}{c}
x_{1}^{2}x_{2}^{2}-x_{0}x_{1}x_{2}x_{3}+x_{1}^{2}x_{3}^{2}+x_{2}^{2}x_{3}^{2}
\end{array}
\right)$$	
  \item We call procedure {\tt dual} on this ideal and the first thing which it does is to adjoin auxiliary variables to our ring. The new ring is: $$\Q[x_{0},x_{1},x_{2},x_{3},\lambda_{1},u_{0},u_{1},u_{2},u_{3}]$$
  \item The next step is constructing the following ideal generated by the system (\ref{ds}) of equations, which in this case is
$$S=\left(
\begin{array}{c}
x_{1}^{2}x_{2}^{2}-x_{0}x_{1}x_{2}x_{3}+x_{1}^{2}x_{3}^{2}+x_{2}^{2}x_{3}^{2}, \\
x_{1}x_{2}x_{3}\lambda_{1}+u_{0}, \\
-2x_{1}x_{2}^{2}\lambda_{1}+x_{0}x_{2}x_{3}\lambda_{1}-2x_{1}x_{3}^{2}\lambda_{1}+u_{1}, \\
-2x_{1}^{2}x_{2}\lambda_{1}+x_{0}x_{1}x_{3}\lambda_{1}-2x_{2}x_{3}^{2}\lambda_{1}+u_{2}, \\
x_{0}x_{1}x_{2}\lambda_{1}-2x_{1}^{2}x_{3}\lambda_{1}-2x_{2}^{2}x_{3}\lambda_{1}+u_{3}
\end{array}
\right)$$
  \item The Gr{\"o}bner basis for $S$ w.r.t. the elimination order is (we only show some elements at the beginning and the end here because it is bigger than this page)
$$G=
\left(
\begin{array}{c}
4u_{0}^{3}-u_{0}u_{1}^{2}-u_{0}u_{2}^{2}+u_{1}u_{2}u_{3}-u_{0}u_{3}^{2}, \\
x_{3}^{3}\lambda_{1}u_{0}u_{1}^{2}+x_{3}^{3}\lambda_{1}u_{0}u_{2}^{2}-x_{3}^{3}\lambda_{1}u_{1}u_{2}u_{3}+u_{0}^{2}u_{1}u_{2}-2u_{0}^{3}u_{3}, \\
4x_{3}^{3}\lambda_{1}u_{0}^{2}-x_{3}^{3}\lambda_{1}u_{3}^{2}+u_{0}u_{1}u_{2}-2u_{0}^{2}u_{3}, \\
\cdots, \\
x_{0}^{2}x_{2}x_{3}\lambda_{1}-4x_{1}^{2}x_{2}x_{3}\lambda_{1}-4x_{2}^{3}x_{3}\lambda_{1}-2x_{0}x_{1}x_{3}^{2}\lambda_{1}+x_{0}u_{1}+2x_{2}u_{3}, \\
x_{0}^{2}x_{1}x_{3}\lambda_{1}-4x_{1}^{3}x_{3}\lambda_{1}-4x_{1}x_{2}^{2}x_{3}\lambda_{1}-2x_{0}x_{2}x_{3}^{2}\lambda_{1}+x_{0}u_{2}+2x_{1}u_{3}
\end{array}
\right)$$
  \item The elimination ideal $E$ consists here only of the first element of $G$
$$E=\left(
\begin{array}{c}
4u_{0}^{3}-u_{0}u_{1}^{2}-u_{0}u_{2}^{2}+u_{1}u_{2}u_{3}-u_{0}u_{3}^{2}
\end{array}
\right)$$
   This is the dual ideal of $I$. We can interpret the $x$ as point coordinates and the $u$ as hyperplane coordinates. But we want to be able to pass this ideal $E$ to {\tt dual} and dualize it too! For this to work, we have to make one final step in {\tt dual} and this is to map the $x$s to the $u$s and viceversa, giving as result the dual ideal of $I$ in point coordinates, which is
$$D=\left(
\begin{array}{c}
4x_{0}^{3}-x_{0}x_{1}^{2}-x_{0}x_{2}^{2}+x_{1}x_{2}x_{3}-x_{0}x_{3}^{2}
\end{array}
\right)$$     
  \item When calling {\tt dual} on $D$ we get $I$ as dual ideal of $D$. We won't show here the steps, but the reader can generate them using the example input file from the appendix.
\end{enumerate}
The input file to {\sc Singular} for this example generates the figure \ref{fig:steiner} by using {\tt surfex.lib} which in turn uses the program {\tt surf}.
\begin{figure}[htbp]
	\centering
		\includegraphics[width=8cm]{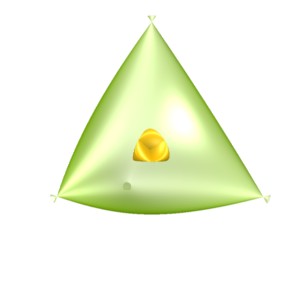}
	\caption{Steiner surface and its dual surface}
	\label{fig:steiner}
\end{figure}
\end{exm}

\subsection{Parametrized quadric}
We present now another example, the forth and back dualization of a somewhat special quadric.
\begin{exm} 
\begin{enumerate}
	\item Our ring is at the beginning $\Q(a_{1},a_{2},a_{3},b_{0},b_{1})[x_{0},x_{1},x_{2},x_{3}]$.
  \item The quadric is given by the zero set of the following ideal (this zero set is a projective algebraic set)
$$Q=\left(
\begin{array}{c}
-b_{0}x_{0}^{2}+2b_{1}x_{0}x_{1}-\frac{b_{1}-1}{a_{1}}x_{1}^{2}+\frac{1}{a_{2}}x_{2}^{2}
\end{array}
\right)$$
  \item We adjoin auxiliary variables and the new ring is $$\Q(a_{1},a_{2},a_{3},b_{0},b_{1})[x_{0},x_{1},x_{2},x_{3},\lambda_{1},u_{0},u_{1},u_{2},u_{3}]$$
  \item Here we construct the system (\ref{ds}) of equations
$$S=\left(
\begin{array}{c}
-b_{0}x_{0}^{2}+2b_{1}x_{0}x_{1}-\frac{b_{1}-1}{a_{1}}x_{1}^{2}+\frac{1}{a_{2}}x_{2}^{2}, \\
2b_{0}x_{0}\lambda_{1}-2b_{1}x_{1}\lambda_{1}+u_{0}, \\
-2b_{1}x_{0}\lambda_{1}+\frac{2b_{1}-2}{a_{1}}x_{1}\lambda_{1}+u_{1}, \\
-\frac{2}{a_{2}}x_{2}\lambda_{1}+u_{2}, \\
u_{3}
\end{array}
\right)$$
  \item The Gr{\"o}bner basis of $S$ is
$$G=\left(
\begin{array}{c}
u_{3}, \\
(b_{1}-1)u_{0}^{2}+2a_{1}b_{1}u_{0}u_{1}+a_{1}b_{0}u_{1}^{2}+(a_{1}a_{2}b_{1}^{2}-a_{2}b_{0}b_{1}+a_{2}b_{0})u_{2}^{2}, \\
2x_{2}\lambda_{1}-a_{2}u_{2}, \\
(a_{1}a_{2}b_{1}^{2}-a_{2}b_{0}b_{1}+a_{2}b_{0})x_{1}u_{2}-a_{1}b_{1}x_{2}u_{0}-a_{1}b_{0}x_{2}u_{1}, \\
(2a_{1}b_{1}^{2}-2b_{0}b_{1}+2b_{0})x_{1}\lambda_{1}-a_{1}b_{1}u_{0}-a_{1}b_{0}u_{1}, \\
a_{2}b_{0}x_{0}u_{2}-a_{2}b_{1}x_{1}u_{2}+x_{2}u_{0}, \\
a_{1}b_{0}x_{0}u_{1}-(b_{1}-1)x_{1}u_{0}-2a_{1}b_{1}x_{1}u_{1}-a_{1}b_{1}x_{2}u_{2}, \\
x_{0}u_{0}+x_{1}u_{1}+x_{2}u_{2}, \\
2b_{0}x_{0}\lambda_{1}-2b_{1}x_{1}\lambda_{1}+u_{0}, \\
a_{1}a_{2}b_{0}x_{0}^{2}-2a_{1}a_{2}b_{1}x_{0}x_{1}+(a_{2}b_{1}-a_{2})x_{1}^{2}-a_{1}x_{2}^{2}
\end{array}
\right)$$
  \item The elimination ideal (the dual of $Q$) is
$$E=\left(
\begin{array}{c}
(b_{1}-1)u_{0}^{2}+2a_{1}b_{1}u_{0}u_{1}+a_{1}b_{0}u_{1}^{2}+(a_{1}a_{2}b_{1}^{2}-a_{2}b_{0}b_{1}+a_{2}b_{0})u_{2}^{2}, \\
u_{3}
\end{array}
\right)$$
  \item We substitute $u \mapsto x$ in $E$ and get 
$$D=\left(
\begin{array}{c}
(b_{1}-1)x_{0}^{2}+2a_{1}b_{1}x_{0}x_{1}+a_{1}b_{0}x_{1}^{2}+(a_{1}a_{2}b_{1}^{2}-a_{2}b_{0}b_{1}+a_{2}b_{0})x_{2}^{2}, \\
x_{3}
\end{array}
\right)$$
\end{enumerate}
\end{exm}
This ideal $D$ can be now again dualized which constitutes our next example.
\begin{exm}
\begin{enumerate}
	\item After adjoining the auxiliary variables, our ring is $$\Q(a_{1},a_{2},a_{3},b_{0},b_{1})[x_{0},x_{1},x_{2},x_{3},\lambda_{1},\lambda_{2},u_{0},u_{1},u_{2},u_{3}]$$
	Notice here that -- since our ideal $D$ has two generators -- we have to adjoin two $\lambda$s (the procedures do this automatically).
	\item The system (\ref{ds}) -- in fact an ideal too -- is here
$$S_2=\left(
\begin{array}{c}
(b_{1}-1)x_{0}^{2}+2a_{1}b_{1}x_{0}x_{1}+a_{1}b_{0}x_{1}^{2}+(a_{1}a_{2}b_{1}^{2}-a_{2}b_{0}b_{1}+a_{2}b_{0})x_{2}^{2}, \\
x_{3}, \\
-(2b_{1}-2)x_{0}\lambda_{1}-2a_{1}b_{1}x_{1}\lambda_{1}+u_{0}, \\
-2a_{1}b_{1}x_{0}\lambda_{1}-2a_{1}b_{0}x_{1}\lambda_{1}+u_{1}, \\
-(2a_{1}a_{2}b_{1}^{2}-2a_{2}b_{0}b_{1}+2a_{2}b_{0})x_{2}\lambda_{1}+u_{2}, \\
-\lambda_{2}+u_{3}
\end{array}
\right)$$
  \item The corresponding Gr{\"o}bner basis is 
$$G_2=\left(
\begin{array}{c}
a_{1}a_{2}b_{0}u_{0}^{2}-2a_{1}a_{2}b_{1}u_{0}u_{1}+(a_{2}b_{1}-a_{2})u_{1}^{2}-a_{1}u_{2}^{2}, \\
\lambda_{2}-u_{3}, \\
x_{3}, \\
(2a_{1}a_{2}b_{1}^{2}-2a_{2}b_{0}b_{1}+2a_{2}b_{0})x_{2}\lambda_{1}-u_{2}, \\
a_{1}x_{1}u_{2}-a_{1}a_{2}b_{1}x_{2}u_{0}+(a_{2}b_{1}-a_{2})x_{2}u_{1}, \\
(2a_{1}^{2}b_{1}^{2}-2a_{1}b_{0}b_{1}+2a_{1}b_{0})x_{1}\lambda_{1}-a_{1}b_{1}u_{0}+(b_{1}-1)u_{1}, \\
x_{0}u_{2}+a_{2}b_{0}x_{2}u_{0}-a_{2}b_{1}x_{2}u_{1}, \\
(b_{1}-1)x_{0}u_{1}-a_{1}b_{0}x_{1}u_{0}+2a_{1}b_{1}x_{1}u_{1}+a_{1}b_{1}x_{2}u_{2}, \\
x_{0}u_{0}+x_{1}u_{1}+x_{2}u_{2}, \\
(2b_{1}-2)x_{0}\lambda_{1}+2a_{1}b_{1}x_{1}\lambda_{1}-u_{0}, \\
(b_{1}-1)x_{0}^{2}+2a_{1}b_{1}x_{0}x_{1}+a_{1}b_{0}x_{1}^{2}+(a_{1}a_{2}b_{1}^{2}-a_{2}b_{0}b_{1}+a_{2}b_{0})x_{2}^{2}
\end{array}
\right)$$
  \item The new elimination ideal is 
$$\left(
\begin{array}{c}
a_{1}a_{2}b_{0}u_{0}^{2}-2a_{1}a_{2}b_{1}u_{0}u_{1}+(a_{2}b_{1}-a_{2})u_{1}^{2}-a_{1}u_{2}^{2}
\end{array}
\right)$$
  \item After mapping the $x$s and $u$s we finally get $Q$ again
$$\left(
\begin{array}{c}
a_{1}a_{2}b_{0}x_{0}^{2}-2a_{1}a_{2}b_{1}x_{0}x_{1}+(a_{2}b_{1}-a_{2})x_{1}^{2}-a_{1}x_{2}^{2}
\end{array}
\right)$$
One might wonder why this looks different than the initial equation for $Q$, but if you multiply the original equation with the non-zero factor $- a_{1}a_{2} \neq 0$ -- an operation which does not change the zero set, you see that you get the same quadric.
\end{enumerate}
\end{exm}
\subsection{8-shaped space curve}
\begin{exm}
In this example we intersect a sphere and a cylinder to get an 8-shaped space curve, which is given by the ideal:
$$I = \left(
\begin{array}{c}
x^{2}+y^{2}-1, \\
x^{2}+y^{2}+z^{2}-2x-3
\end{array}
\right)$$
You can see these surfaces and their intersection in figure \ref{fig:8begin}.

Now we dualize the ideal $I$ and get the following ideal (after dehomogenizing) with one polynomial as generator:
$$D = \left(
\begin{array}{c}
4x^{6}+12x^{4}y^{2}+12x^{2}y^{4}+4y^{6}-12x^{4}z^{2}-24x^{2}y^{2}z^{2}-12y^{4}z^{2}-15x^{2}z^{4}+\\
12y^{2}z^{4}-4z^{6}+36x^{3}z^{2}+36xy^{2}z^{2}+18xz^{4}-8x^{4}-16x^{2}y^{2}-8y^{4}-20x^{2}z^{2}-\\
20y^{2}z^{2}+z^{4}-4xz^{2}+4x^{2}+4y^{2}
\end{array}
\right)$$
$D$ is the dual of the 8-shaped space curve. They are depicted in figure \ref{fig:8dual}.
\begin{figure}[htbp]
	\centering
		\includegraphics[width=8cm]{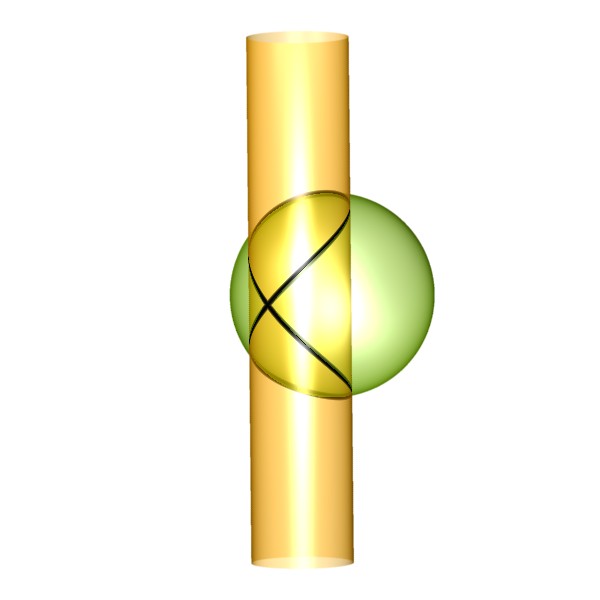}
	\caption{8-shaped curve as intersection of cylinder and sphere}
	\label{fig:8begin}
\end{figure}
\begin{figure}[htbp]
	\centering
		\includegraphics[width=8cm]{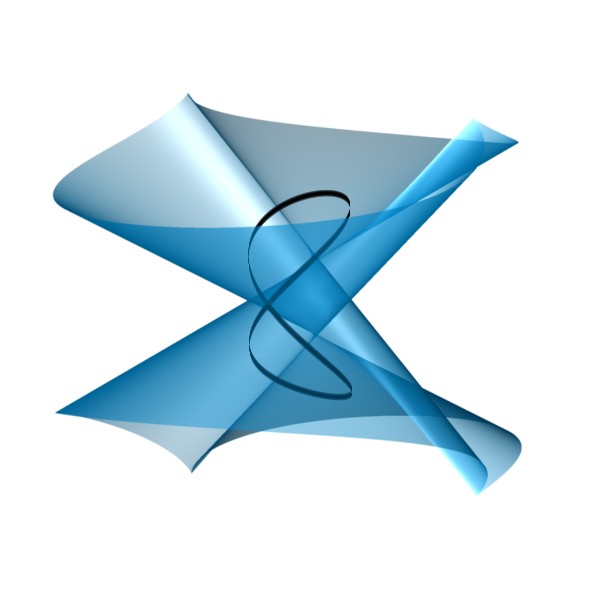}
	\caption{8-shaped curve and its dual surface}
	\label{fig:8dual}
\end{figure}
\end{exm}
The {\sc Singular} code for this example is:
\begin{verbatim}
// load libs
LIB "duality.lib";
LIB "surfex.lib";

ring r=0,(t,x,y,z),dp; // ring over Q
short = 0; // print polynomials with ^

// two polynomials 
poly cylinder = x^2+y^2-1;
poly sphere = (x-1)^2+y^2+z^2-2^2;

// intersection ideal (a space curved shaped like an 8)
ideal i1 = cylinder, sphere;   // inhomogeneous for plotting
ideal i2 = homog(i1, t);       // homogeneous for dualising
ideal d1 = dual(i2);           // dual of intersection ideal (a surface)
poly  d2 = subst(d1[1], t, 1); // dehomogenize dual for plotting
d2;                            // show result 

// plot everything
// (in surfex you may want to set transparency options for some surfaces)
plotRotatedList(list(cylinder, sphere, i1, d2), list(x,y,z)); 
\end{verbatim}
\subsection{Examples from the introductory book \cite{GF94}}
As further examples that the procedure {\tt dual} works, I dualize some planar algebraic curves from the book \cite{GF94} without including the intermediary output from {\sc Singular}.
\begin{exm} 
The Neil parabola is given by
$$\left(
\begin{array}{c}
x_{1}^{3}-x_{0}x_{2}^{2}
\end{array}
\right)$$
and its dual is
$$\left(
\begin{array}{c}
4x_{1}^{3}+27x_{0}x_{2}^{2}
\end{array}
\right)$$
You can see an illustration in figure \ref{fig:ex05_neil_parabola}
\begin{figure}[htbp]
	\centering
		\includegraphics[width=4cm]{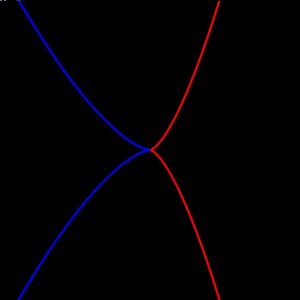}
	\caption{Neil parabola (red) and its dual}
	\label{fig:ex05_neil_parabola}
\end{figure}
\end{exm}

\begin{exm} 
The Newton knot is given by
$$\left(
\begin{array}{c}
x_{0}x_{1}^{2}+x_{1}^{3}-x_{0}x_{2}^{2}
\end{array}
\right)$$
and its dual cardioid is 
$$\left(
\begin{array}{c}
4x_{0}x_{1}^{3}-4x_{1}^{4}+27x_{0}^{2}x_{2}^{2}-36x_{0}x_{1}x_{2}^{2}+8x_{1}^{2}x_{2}^{2}-4x_{2}^{4}
\end{array}
\right)$$
You can see an illustration in figure \ref{fig:ex06_newton_knot}
\begin{figure}[htbp]
	\centering
		\includegraphics[width=4cm]{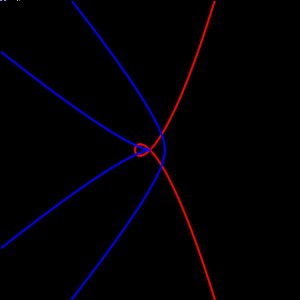}
	\caption{Newton knot (red) and its dual cardioid}
	\label{fig:ex06_newton_knot}
\end{figure}
\end{exm}

\begin{exm} 
The hypocycloid is given by
$$\left(
\begin{array}{c}
x_{0}^{2}x_{1}^{2}-2x_{0}^{2}x_{1}x_{2}-2x_{0}x_{1}^{2}x_{2}+x_{0}^{2}x_{2}^{2}-2x_{0}x_{1}x_{2}^{2}+x_{1}^{2}x_{2}^{2}
\end{array}
\right)$$
and its dual is calculated as
$$\left(
\begin{array}{c}
x_{0}^{3}+3x_{0}^{2}x_{1}+3x_{0}x_{1}^{2}+x_{1}^{3}+3x_{0}^{2}x_{2}-21x_{0}x_{1}x_{2}+3x_{1}^{2}x_{2}+3x_{0}x_{2}^{2}+3x_{1}x_{2}^{2}+x_{2}^{3}
\end{array}
\right)$$
You can see an illustration in figure \ref{fig:ex07_hypocycloid}
\begin{figure}[htbp]
	\centering
		\includegraphics[width=4cm]{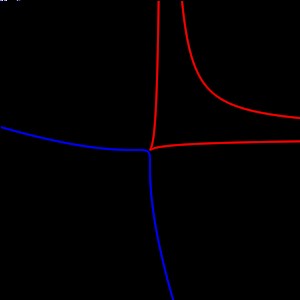}
	\caption{Hypocycloid (red) and its dual}
	\label{fig:ex07_hypocycloid}
\end{figure}
\end{exm}

\begin{exm} 
The next example was inspired by the Klein quartic. I changed one of the ellipses to a hyperbola and
the generated ideal is 
$$\left(
\begin{array}{c}
x_{0}^{4}-\frac{5}{4}x_{0}^{2}x_{1}^{2}+\frac{1}{4}x_{1}^{4}-\frac{3}{4}x_{0}^{2}x_{2}^{2}+\frac{15}{16}x_{1}^{2}x_{2}^{2}-\frac{1}{4}x_{2}^{4}-\frac{1}{70}x_{0}^{2}
\end{array}
\right)$$
and its dual ideal is calculated by {\sc Singular} as generated by the polynomial
$$\left(
\begin{array}{c}
12390875x_{0}^{12}-120264375x_{0}^{10}x_{1}^{2}+442991850x_{0}^{8}x_{1}^{4}-822808000x_{0}^{6}x_{1}^{6}+\\
827628480x_{0}^{4}x_{1}^{8}-431827200x_{0}^{2}x_{1}^{10}+91888128x_{1}^{12}+2186625x_{0}^{10}x_{2}^{2}-\\
148231125x_{0}^{8}x_{1}^{2}x_{2}^{2}+902043450x_{0}^{6}x_{1}^{4}x_{2}^{2}-1921126200x_{0}^{4}x_{1}^{6}x_{2}^{2}+\\
1725988320x_{0}^{2}x_{1}^{8}x_{2}^{2}-560787840x_{1}^{10}x_{2}^{2}-116455850x_{0}^{8}x_{2}^{4}+713525750x_{0}^{6}x_{1}^{2}x_{2}^{4}-\\
784988540x_{0}^{4}x_{1}^{4}x_{2}^{4}-703298400x_{0}^{2}x_{1}^{6}x_{2}^{4}+914535936x_{1}^{8}x_{2}^{4}+\\
232142400x_{0}^{6}x_{2}^{6}-539359800x_{0}^{4}x_{1}^{2}x_{2}^{6}-507564960x_{0}^{2}x_{1}^{4}x_{2}^{6}-\\
598014720x_{1}^{6}x_{2}^{6}-197686720x_{0}^{4}x_{2}^{8}+58816800x_{0}^{2}x_{1}^{2}x_{2}^{8}+119161344x_{1}^{4}x_{2}^{8}+\\
80183040x_{0}^{2}x_{2}^{10}+32722560x_{1}^{2}x_{2}^{10}-12753408x_{2}^{12}
\end{array}
\right)$$
You can see an illustration in figure \ref{fig:ex08_my_quartic}. 
\begin{figure}[htbp]
	\centering
		\includegraphics[width=4cm]{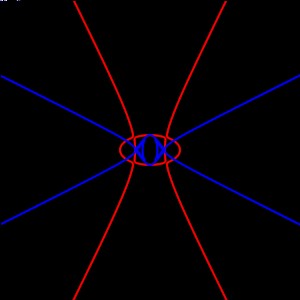}
	\caption{A quartic (red) inspired by Klein's quartic and the dual curve}
	\label{fig:ex08_my_quartic}
\end{figure}
\end{exm}
\newpage
\section{Appendix}
The code for the Steiner surface example is:
\begin{verbatim}
/////////////////////////////////////////////
// This procedure calculates the dual ideal of the homogeneous ideal id
// The output is a homogeneous ideal in the same ring
/////////////////////////////////////////////
proc dual(ideal I) {
  def R0=basering;
  if(npars(R0)>0)    {ERROR("Use a base ring without parameters!");};
  if(ord_test(R0)!=1){ERROR("The base ring must have a global ordering!");};
  if(homog(I)!=1)    {ERROR("The input ideal must be homogeneous!");};
// get some information about the base ring and the input ideal
  int n=nvars(R0);
  int m=ncols(I);
// change variables and compute transposed Jacobi matrix of I
  def NR=changevar("x()",R0); 
  setring NR;
  ideal I=fetch(R0, I);
  matrix J=transpose(jacob(I));
// adjoin auxiliary variables to the ring
  def E1=extendring(m,"l()","dp",1,NR);
  def R=extendring(n,"u()","dp",1,E1);
  setring R;
  matrix J=fetch(NR, J);
// set up system S
  ideal I=fetch(NR, I);
  matrix L=matrix([u(1..n)])-J*matrix([l(1..m)]);
  ideal S=I,L;
// eliminate first m+n variables from S by Groebner bases method
  int j,k;   
  poly prod=1;
  for(k=1;k<=n;k++){prod=prod*x(k);};
  for(j=1;j<=m;j++){prod=prod*l(j);};
  intvec v=hilb(std(S),1); 
  ideal I1=eliminate(S,prod,v);
// resubstitute variables, such that the output can be used again as input
  map f=R,(u(1..n),l(1..m),x(1..n));
  ideal I2=ideal(f(I1));
// restore initial ring and return the result
  setring R0; export R0;
  return(fetch(R, I2));
}
/////////////////////////////////////////////
LIB "surfex.lib"; // Load library
ring R1 = 0,(x(0..3)),dp; // Define ring
// First example: Steiner's Roman surface 
ideal I = (x(1)*x(2))^2+(x(1)*x(3))^2+(x(2)*x(3))^2-x(0)*x(1)*x(2)*x(3);
ideal D = dual(I);
ideal DD = dual(D);
I; D; DD; // show results
// Plot with surfex
ring R2 = 0,(x,y,z),dp;
map f=(R1, 1,x,y,z);
plotRotatedList(list(f(I), f(D)), list(x,y,z));
\end{verbatim}



\begin{thebibliography}{GPS09}

\bibitem[Bri]{bri} {\sc E. Brieskorn, H. Knörrer}: {\sl Ebene algebraische Kurven},
  Birkhäuser Boston, (1981).

\bibitem[Cox]{cox} {\sc David A. Cox}: {\sl Gröbner Bases Tutorial},
  http://www.cs.amherst.edu/ \~{}dac/lectures/gb1.handout.pdf, (2007).

\bibitem[Fis]{GF94} {\sc Gerd Fischer}: {\sl Ebene algebraische Kurven},
  Vieweg Verlag, Braunschweig/Wiesbaden, (1994).

\bibitem[Fro]{fro} {\sc Ralf Fröberg}: {\sl An introduction to Gröbner bases},
  Pure and Applied Mathematics, Wiley-­Interscience Series of Texts, Monographs, and Tracts. Chichester: John Wiley and Sons, (1997).   

\bibitem[Gie]{gie} {\sc Oswald Giering}: {\sl Vorlesungen über höhere Geometrie},
  Vieweg Verlag, Braunschweig, (1982).

\bibitem[GP]{GP} {\sc G.-M. Greuel, G. Pfister}: {\sl A {\sc Singular}  Introduction to Commutative Algebra}, 
  2nd Edition. Springer Verlag, Berlin, Heidelberg, New York, (2007). 

\bibitem[GPS09]{GPS09} {\sc G.-M. Greuel, G. Pfister, and H. Schönemann}: {\sc Singular} 3.1.0 -- 
{\sl A computer algebra system for polynomial computations.},
http://www.singular.uni-kl.de, (2009).
  
\bibitem[Kom]{kom} {\sc Karl Kommerell}: {\sl Vorlesungen über analytische Geometrie des Raumes},
  K.F. Koehler Verlag / Leipzig, (1940).

\bibitem[L]{l} {\sc Luis E. Lopez}: {\sl Images of the polar maps for hypersurfaces},
  arXiv:0811.0754v1 [math.AG], (5 Nov 2008).

\bibitem[MP]{hr} {\sc Heidi Mork and Ragni Piene}: {\sl Polar and dual varieties of real curves and surfaces},
http://www.ima.umn.edu/2006-2007/W9.18-22.06/activities/Piene-Ragni/Piene\_190906.pdf, (2006).

\bibitem[PW]{pw} {\sc Helmut Pottmann, Johannes Wallner}: {\sl Computational line geometry},
  Springer Verlag, (2001).

\bibitem[S]{s} {\sc Karen E. Smith, L. Kahanpää, P. Kekäläinen, W. Traves}: {\sl An invitation to algebraic geometry},
  Springer Verlag, (2000).
\end{thebibliography}
\end{document}